\newtheorem{theorem}{Theorem}[section]
\newtheorem{lemma}[theorem]{Lemma}
\newtheorem{question}[theorem]{Question}
\newtheorem{definition}[theorem]{Definition}
\newtheorem{corollary}[theorem]{Corollary}
\theoremstyle{plain}
\numberwithin{equation}{theorem}
\theoremstyle{remark}
\newtheorem{remark}[theorem]{Remark}
\newtheorem{example}[theorem]{Example}
\newcommand{\Z}{{\mathbb Z}}
\newcommand{\Pl}{{\mathbb P}}
\newcommand{\lra}{\longrightarrow}
\title[Projection of the diagonal subvariety]{A note on the projection of the diagonal subvariety}
\author{Jorge Pineiro}
\address{Jorge Pineiro: Department of Mathematics and Computer Science.
Bronx Community College of CUNY.
2155 University Ave.
Bronx, NY 10453}
\email{jorge.pineiro@bcc.cuny.edu}
\subjclass[2010]{Primary: 14G40; Secondary: 11G50, 37P55 }
\begin{document}
\begin{abstract}
The diagonal subvariety on a product of two CM elliptic curves, is presented as an example of a dimension one subvariety, that is pre-periodic only if the respective projection on the product of two projective lines is also pre-periodic.

\end{abstract}
\maketitle

\section{Introduction}
 In \cite{DMMC} the authors presented a contradiction to the Dynamical Manin-Mumford conjecture using product of CM-elliptic curves.  They considered maps $[\omega] \times [\omega'] : E \times E \lra E \times E$ on the product $E \times E$, where $E$ is an elliptic with complex multiplication by $\omega, \omega'$, and studied the orbit of the diagonal subvariety $\Delta=\{(P,P): P \in E)\}$.  In order to state the result obtained in \cite{DMMC} we define a subvariety $Y \subset X$ to be pre-periodic for a map $\varphi$ if it has a finite forward orbit $\{ Y,\varphi_*Y,\dots\}$ under $\varphi$.  The characterization of when is $\Delta$ pre-periodic for the map $\varphi=[\omega] \times [\omega']$ is given by (c.f. lemma \ref{diaginEE}):
 \begin{lemma}
$\Delta$ is pre-periodic for $[\omega] \times [\omega'] : E \times E \lra E  \times E$ if and only if $\omega/\omega' $ is a root of unity.
\end{lemma}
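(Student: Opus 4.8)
The plan is to pass to the complex uniformization and track the forward orbit of $\Delta$ through the ``slopes'' of the elliptic subcurves it produces. Write $E = \C/\Lambda$ for a lattice $\Lambda \subset \C$, so that $E \times E = \C^2/(\Lambda \times \Lambda)$ and the endomorphisms $[\omega], [\omega']$ are induced by multiplication by the complex numbers $\omega,\omega'$ (with $\omega\Lambda, \omega'\Lambda \subseteq \Lambda$, both nonzero). Under this identification $\varphi$ is induced by the linear map $(z,w)\mapsto(\omega z,\omega' w)$, and the diagonal $\Delta$ is the image of the complex line $\ell_1 = \{(z,z):z\in\C\}$ through the origin.

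First I would observe that every forward image $\varphi_*^n\Delta$ is again an elliptic subcurve through the origin. Indeed $\varphi$ is a group homomorphism fixing $0$ and $\Delta$ is a one-dimensional abelian subvariety, so each successive image is again a connected one-dimensional subgroup, that is, the image of a complex line $\ell_s=\{(z,sz):z\in\C\}$ through the origin. Two such lines yield the same subvariety precisely when they coincide in $\C^2$, so the subvariety is faithfully recorded by its slope $s$ (equivalently by a point of $\mathbb{P}^1(K)$, where $K$ is the CM field).

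Next I would compute the action on slopes. Applying $(z,w)\mapsto(\omega z,\omega' w)$ to $\ell_s$ gives $\{(\omega z,\omega' s z)\}$; reparametrising by $Z=\omega z$ exhibits this as $\ell_{\lambda s}$ with $\lambda=\omega'/\omega$. Hence one step of $\varphi$ multiplies the slope by $\lambda$, and since $\Delta=\ell_1$ we obtain $\varphi_*^n\Delta=\ell_{\lambda^n}$. The forward orbit is therefore finite if and only if the set $\{\lambda^n:n\ge 0\}\subset\C^\times$ is finite, which happens exactly when $\lambda$ is a root of unity; and $\lambda=\omega'/\omega$ is a root of unity if and only if $\omega/\omega'$ is, giving the claim.

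The step needing the most care is the bookkeeping in the second paragraph: one must confirm that the relevant objects are exactly the elliptic subcurves through the origin and that such a subcurve is determined by the single complex slope $s$, so that ``finite orbit of subvarieties'' is literally ``finite orbit of slopes.'' If one prefers to track the cycle-theoretic pushforward rather than the reduced image, I would additionally note that the support of $\varphi_*^n\Delta$ is $\ell_{\lambda^n}$, so finiteness of the sequence of supports already forces $\lambda$ to be a root of unity, while the converse implication (periodicity of slopes forcing periodicity of the full cycle) is immediate.
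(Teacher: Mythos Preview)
Your argument is correct, but it follows a genuinely different route from the paper's. The paper argues pointwise: assuming $\varphi^{n+k}(\Delta)=\varphi^n(\Delta)$, one picks a non-torsion $P\in E$, finds $Q$ with $([\omega]^{n+k}P,[\omega']^{n+k}P)=([\omega]^nQ,[\omega']^nQ)$, and deduces that $([\omega]^k-[\omega']^k)(P)$ is torsion, forcing $\omega^k=\omega'^k$; the converse is immediate from surjectivity of $[\omega]^k$. Your proof instead passes to the complex uniformization, identifies each forward image with a one-dimensional abelian subvariety through the origin, and records it by the slope of its tangent line, reducing the question to finiteness of $\{(\omega'/\omega)^n\}$ in $\C^\times$.

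What each approach buys: the paper's argument is entirely algebraic and needs nothing beyond the existence of a non-torsion point and the fact that $\End(E)$ is an integral domain; it also transparently adapts to the $\bP^1\times\bP^1$ setting of the companion lemma, where the sign ambiguity from the hyperelliptic involution enters naturally. Your approach is more structural and makes the orbit explicit as $\{\ell_{\lambda^n}\}$, which immediately gives the exact period and generalizes cleanly to products of higher-dimensional abelian varieties (tracking a point in a Grassmannian rather than a slope). The one step you rightly flag---that an abelian subvariety through $0$ is determined by its tangent line---is standard via the exponential map, so the bookkeeping is sound.
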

 Similar properties can be found for the diagonal subvariety $\Delta'$ inside the product of projective lines under the action of Latt\'{e}s maps.  Any map $[\omega] : E \lra E$ on an elliptic curve $E$ can be projected to a map $\varphi_{[\omega]} : \Pl^1 \lra \Pl^1$, when we mod out by the hyperelliptic involution $\pi : E \lra \Pl^1$.  The analogous result characterizing when is the orbit of the diagonal $\Delta'=\{(P,P): P \in \Pl^1  \}$ finite under the action of the product of two Latt\'{e}s maps reads (c.f. lemma \ref{diaginP1P1}):
 \begin{lemma}
$\Delta'$ is preperiodic for $\varphi_{[\omega']}\times \varphi_{[\omega']} : \Pl^1  \times \Pl^1 \lra \Pl^1 \times  \Pl^1$ if and only if $\omega/\omega' $ is a root $\pm 1$.
\end{lemma}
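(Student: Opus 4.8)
The plan is to deduce the statement from the already-established characterization on $E\times E$ (Lemma \ref{diaginEE}) by transporting the forward orbit of $\Delta$ down to $\Pl^1\times\Pl^1$ through the hyperelliptic quotient. Write $\Phi=[\omega]\times[\omega']$ and $\Psi=\varphi_{[\omega]}\times\varphi_{[\omega']}$, and let $p=\pi\times\pi:E\times E\to\Pl^1\times\Pl^1$ be the product of the two hyperelliptic projections, a finite morphism of degree four. The backbone of the argument is the commutativity $p\circ\Phi=\Psi\circ p$, which holds factorwise by the defining relation $\pi\circ[\omega]=\varphi_{[\omega]}\circ\pi$; combined with $p(\Delta)=\Delta'$ this gives, for the $n$-th iterate (the image of the diagonal under $\Phi^n$), the identity $p\bigl(\Phi^n_{*}\Delta\bigr)=\Psi^n_{*}\Delta'$.

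For the easy direction I would assume $\omega/\omega'$ is a root of unity. By Lemma \ref{diaginEE} the orbit $\{\Phi^n_{*}\Delta\}$ is then finite, and applying $p$ shows that $\{\Psi^n_{*}\Delta'\}=\{p(\Phi^n_{*}\Delta)\}$ is the image of a finite set, hence finite; thus $\Delta'$ is pre-periodic.

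The substance lies in the converse. Assume $\Delta'$ is pre-periodic, so $\{\Psi^n_{*}\Delta'\}$ is a finite set of curves. Each $\Phi^n_{*}\Delta$ is irreducible, being the image of the irreducible curve $\Delta\cong E$ under the morphism $P\mapsto([\omega^n]P,[\omega'^n]P)$, and by the displayed identity it is contained in $p^{-1}(\Psi^n_{*}\Delta')$ and in fact surjects onto $\Psi^n_{*}\Delta'$. Since $p$ is finite, each $p^{-1}(\Psi^n_{*}\Delta')$ is again a curve and so has only finitely many irreducible components; as $\Phi^n_{*}\Delta$ is an irreducible one-dimensional subset of it, it must be one of those components. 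Because there are only finitely many distinct $\Psi^n_{*}\Delta'$, the curves $\Phi^n_{*}\Delta$ range over a finite set, so $\Delta$ is pre-periodic for $\Phi$, and Lemma \ref{diaginEE} forces $\omega/\omega'$ to be a root of unity.

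The step I expect to be the main obstacle is precisely this converse: a priori the finite degree-four map $p$ could collapse infinitely many distinct curves $\Phi^n_{*}\Delta$ onto only finitely many images, so finiteness of the orbit downstairs does not formally imply finiteness upstairs. The component-counting above is exactly what rules this out, using that the preimage under a finite morphism of a single curve is a curve with boundedly many components. In writing out the details I would be careful to verify the two facts this rests on, namely that $\Phi^n_{*}\Delta$ is irreducible and that $p(\Phi^n_{*}\Delta)=\Psi^n_{*}\Delta'$ holds with equality (the latter because $\pi$ is surjective, so every $x\in\Pl^1$ is hit). Once these are in place the reduction to Lemma \ref{diaginEE} is immediate, and the characterization of pre-periodicity of $\Delta'$ coincides with that of $\Delta$.
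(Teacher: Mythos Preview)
Your argument is correct, but it takes a genuinely different route from the paper's.  The paper works pointwise, mimicking the proof of Lemma~\ref{diaginEE}: assuming $(\varphi_{[\omega]},\varphi_{[\omega']})^{n+k}(\Delta')=(\varphi_{[\omega]},\varphi_{[\omega']})^{n}(\Delta')$, it lifts through $(\pi,\pi)$ and uses that the fibers of $\pi$ are $\{Q,-Q\}$ to write, for each $P\in E$, an equation $([\omega]^{n+k}P,[\omega']^{n+k}P)=([\omega]^{n}Q,\pm[\omega']^{n}Q)$ for some $Q$; the torsion argument then forces $[\omega]^{k}\mp[\omega']^{k}=0$, i.e.\ $(\omega/\omega')^{k}=\pm 1$.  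You instead argue geometrically: $\Phi^{n}_{*}\Delta$ is an irreducible component of $p^{-1}(\Psi^{n}_{*}\Delta')$, and a finite map pulls back finitely many curves to finitely many components, so preperiodicity of $\Delta'$ forces preperiodicity of $\Delta$, after which you invoke Lemma~\ref{diaginEE} as a black box.  Your approach is more conceptual and would work verbatim for any finite quotient $p$, not just the hyperelliptic one.  The paper's hands-on approach, however, extracts the sharper conclusion $(\omega/\omega')^{k}=\pm 1$ with the \emph{same} $k$ as the period of $\Delta'$; this refinement is exactly what is used in the Corollary immediately following the lemma (and in the example with $\omega/\omega'=-i$) to show that $\Delta'$ can have strictly smaller period than $\Delta$.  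Your component-counting argument, as written, loses track of the period and only yields that $\omega/\omega'$ is some root of unity.
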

 Combining these two results together we have an example of subvarieties $\Delta,\Delta'$ satisfying:
 \begin{theorem} $\Delta$ is pre-periodic for $[\omega] \times [\omega']$ if and only if $\pi_*(\Delta)=\Delta'$ is pre-periodic for $\varphi_{[\omega]} \times \varphi_{[\omega']}$.
 \end{theorem}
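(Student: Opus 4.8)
The plan is to obtain the theorem by combining the two criteria already available as Lemma~\ref{diaginEE} and Lemma~\ref{diaginP1P1}, once the two dynamical systems have been tied together by the projection. First I would record the semiconjugacy: each $[\omega]\colon E\lra E$ commutes with the hyperelliptic involution, hence descends through $\pi$ to $\varphi_{[\omega]}$ with $\pi\circ[\omega]=\varphi_{[\omega]}\circ\pi$, and therefore on the product $(\pi\times\pi)\circ([\omega]\times[\omega'])=(\varphi_{[\omega]}\times\varphi_{[\omega']})\circ(\pi\times\pi)$. Evaluating $\pi\times\pi$ on $\Delta=\{(P,P):P\in E\}$ gives $\{(\pi(P),\pi(P)):P\in E\}=\Delta'$, which is the asserted identity $\pi_*(\Delta)=\Delta'$; combined with the semiconjugacy it shows that the forward orbit of $\Delta'$ under $\varphi_{[\omega]}\times\varphi_{[\omega']}$ is exactly the image under $\pi\times\pi$ of the forward orbit of $\Delta$ under $[\omega]\times[\omega']$.

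With this dictionary in place, the core step is to match the two preperiodicity conditions. Lemma~\ref{diaginEE} says that $\Delta$ is preperiodic for $[\omega]\times[\omega']$ exactly when $\omega/\omega'$ is a root of unity, while Lemma~\ref{diaginP1P1} says that $\Delta'$ is preperiodic for $\varphi_{[\omega]}\times\varphi_{[\omega']}$ exactly when $\omega/\omega'$ is a root of $\pm1$. The step I regard as the actual content is the elementary but essential observation that these two conditions cut out the same set of ratios: a root of $+1$ or of $-1$ is in either case a root of unity, and conversely every root of unity is already a root of $+1$. Thus the hypotheses of the two lemmas coincide, and the equivalence ``$\Delta$ preperiodic $\iff\Delta'$ preperiodic'' follows immediately.

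The one place where the finite map $\pi\times\pi$ genuinely intervenes---and the point I would treat most carefully---is the appearance of the sign downstairs. The $n$-th orbit curve $\{([\omega]^nP,[\omega']^nP):P\in E\}$ is the graph of the correspondence attached to $(\omega'/\omega)^n$, and two such graphs already acquire the same image in $\Pl^1\times\Pl^1$ once they differ by one of the four sign changes coming from the two hyperelliptic involutions, i.e. as soon as $(\omega/\omega')^{n-m}=\pm1$ rather than $=1$. A priori this collapsing could make the orbit downstairs finite while the orbit upstairs stays infinite; the force of Lemma~\ref{diaginP1P1} is precisely that it cannot, since $\pm1$-periodicity of the ratio still forces $\omega/\omega'$ to be a root of unity. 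Granting the two lemmas no further estimate is needed; a characterization-free alternative would instead push preperiodicity forward along the finite surjection $\pi\times\pi$ (the easy implication) and descend it by bounding the number of irreducible components of the preimage of a single orbit curve (the harder implication), which is the step I would expect to be the main obstacle in that approach.
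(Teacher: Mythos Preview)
Your proposal is correct and matches the paper's approach exactly: the theorem is stated there as an immediate consequence of Lemmas~\ref{diaginEE} and~\ref{diaginP1P1}, and your key observation that ``root of $\pm1$'' and ``root of unity'' define the same set of ratios is precisely the link the paper leaves implicit. Your additional discussion of the semiconjugacy and the sign collapse under $\pi\times\pi$ is more explicit than the paper's treatment but adds nothing beyond what is needed.
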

 Note that the maps $[\omega] \times [\omega']: E \times E \lra E \times E$, $\varphi_{[\omega]} \times \varphi_{[\omega']}: \Pl^1 \times \Pl^1 \lra \Pl^1 \times \Pl^1$ fit into a commutative diagram:
 \[
\begin{CD}
 \Delta \subset E \times E            @>[\omega] \times [\omega']>>   E \times E \\
    @V (\pi,\pi) VV            @V (\pi,\pi) VV          \\
  \pi(\Delta)=\Delta' \subset \Pl^1 \times \Pl^1   @>\varphi_{[\omega]} \times \varphi_{[\omega']}>> \Pl^1 \times \Pl^1 .\\
\end{CD}\vspace{3mm}
\]
In the next section we will put the above example in the context of commutative diagram of varieties.  We will show that the orbits of $\Delta$ and $\Delta'$ for some values of $\omega$ and $\omega'$ does not have the same length.

 \section{Projection of subvarieties in a commutative diagram} Let $X$ be a projective algebraic variety.  An algebraic dynamical system is a self-map $\varphi : X \lra X$, defined on the algebraic variety $X$.  A subvariety $Y \subset X$ is said to be pre-periodic for $\varphi$ if the forward orbit under $\varphi$ is finite, more precisely

  \begin{definition}The subvariety $Y \varsubsetneq X$ is pre-periodic for $\varphi : X \lra X$ if there are natural numbers $n,k$, with $k>0$, such that $\varphi_*^{n+k}(Y)=\varphi_*^n(Y)$.  In this situation we say that $Y$ is pre-periodic with pair $(n,k)$. \end{definition}
  \begin{example}If $E$ is an elliptic curve, $P$ is a torsion point with $[k]P=0$ and we take the point $Q$ such that $[n]Q=P$, the point $Q$ will be pre-periodic for the map $[n]:E \lra E$ with pair $(n,k)$.  \end{example}
   Suppose now that we have algebraic dynamical systems $\varphi, \tilde{\varphi}$ that fit into a commutative diagram as follows:
\[
\begin{CD}
 X            @>\varphi>>   X  \\
@V \pi VV        @VV \pi V    \\
  \tilde{X}    @>\tilde{\varphi}>> \tilde{X}\\
\end{CD} \vspace{2mm}
\]
The first thing to note is the following:
\begin{remark}
 If a subvariety $Y \subset X$ is pre-periodic for $\varphi$ then the projection $\pi_*Y$ is pre-periodic for $\tilde{\varphi}$.  Indeed, $Y$ pre-periodic implies the existence of natural numbers $n,k (k>0)$ such that, $$ \varphi_*^{n+k}(Y)=\varphi_*^n(Y) \Rightarrow \pi_*(\varphi_*^{n+k}(Y))=\pi_*(\varphi_*^n(Y)) \Rightarrow \varphi_*^{n+k}(\pi_*Y)=\varphi_*^n(\pi_*Y),$$
 and therefore $\pi_*Y$ will also be pre-periodic with the same pair $(n,k)$.
\end{remark}
We are interested then in studying the following question:
\begin{question} \label{Q1}
Is it true that $Y \subsetneq X$ is pre-periodic for $\varphi$ if and only if the projection $\pi_* Y \subsetneq \tilde{X}$ is pre-periodic for $\tilde{\varphi}$? In case of a positive answer, does it happens with the same pair $(n,k)$?
\end{question}
\begin{remark}
For subvarieties of dimension zero $Y=P$ with reduced structure.  The theory of canonical height functions developed by Silverman and other authors in \cite{silvermancanonicalheights} or \cite{intro-diophantine} for polarized dynamical systems, allows to completely characterize the pre-periodic points as points with height $\hat{h}_{\varphi}(P)=0$.  The first part of the question \ref{Q1} is settled this way.  For higher dimensional subvarieties, the results of Ghioca, Tucker and Zhang \cite{DMMC} show how the vanishing of height is not a sufficient condition for a subvariety to be pre-periodic.

\end{remark}
\subsection{The diagonal subvariety inside the product of elliptic curves} We consider the following situation of a subvariety of dimension one.  Let $E$ be an elliptic curve with complex multiplication by a ring $R$ and $\omega, \omega' \in R$.  Let $\pi: E \rightarrow \Pl^1$ be the map arising when we mod out by the hyperelliptic involution.  We want to study Question \ref{Q1} for the diagram:
\[
\begin{CD}
 \Delta \subset E \times E            @>[\omega] \times [\omega']>>   E \times E \\
    @V (\pi,\pi) VV            @V (\pi,\pi) VV          \\
  \pi(\Delta)=\Delta' \subset \Pl^1 \times \Pl^1   @>\varphi_{[\omega]} \times \varphi_{[\omega']}>> \Pl^1 \times \Pl^1, \\
\end{CD}\vspace{3mm}
\]
where $\Delta$ and $\Delta'$ are respectively the diagonal subvarieties:
$$\Delta=\{(P,P): P \in E \}, \qquad \Delta'=\{(P,P): P \in \Pl^1 \}$$
The result is the following:
\begin{theorem} \label{Equivalence-of-periodic}
Let $\Delta \subsetneq E \times E$ and $\Delta' \subsetneq \Pl^1 \times \Pl^1$ denote respectively the diagonal subvarieties.  Suppose that $\omega, \omega' \in R$, then $\Delta$ is pre-periodic for $[\omega] \times [\omega'] : E \times E \lra E  \times E$ if and only if $\Delta'$ is preperiodic for $\varphi_{[\omega]}\times \varphi_{[\omega']} : \Pl^1  \times \Pl^1 \lra \Pl^1 \times  \Pl^1$.
\end{theorem}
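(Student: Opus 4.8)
The plan is to prove the two implications separately, since only one of them carries any content. For the forward direction, suppose $\Delta$ is pre-periodic for $[\omega]\times[\omega']$. Because the square commutes and $\Delta'=(\pi,\pi)_*\Delta$, the general observation recorded above---that pre-periodicity descends along a $\pi$ compatible with the dynamics---immediately gives that $\Delta'$ is pre-periodic for $\varphi_{[\omega]}\times\varphi_{[\omega']}$, with the same pair $(n,k)$. This half uses nothing special about the curves; it is the reverse implication that is delicate, and for a general diagram it can fail.

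I would establish the converse by exploiting that $(\pi,\pi):E\times E\lra \Pl^1\times\Pl^1$ is a finite morphism of degree $4$. Write $C_n=([\omega]\times[\omega'])_*^n\Delta$ and $D_n=(\varphi_{[\omega]}\times\varphi_{[\omega']})_*^n\Delta'$. Commutativity of the square gives $(\pi,\pi)(C_n)=D_n$ for every $n$, so $C_n\subseteq(\pi,\pi)^{-1}(D_n)$. Each $C_n$ is irreducible, being the image of $E$ under the homomorphism $P\mapsto([\omega]^nP,[\omega']^nP)$. Since $(\pi,\pi)$ is finite, the preimage $(\pi,\pi)^{-1}(D_n)$ is a one-dimensional closed subscheme with only finitely many irreducible components, and the irreducible curve $C_n$, having the same dimension, must coincide with one of them.

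Now assume $\Delta'$ is pre-periodic, so that $\{D_n\}_{n\ge 0}$ is a finite set. Then every $C_n$ lies among the finitely many irreducible components of the finite union $\bigcup_n(\pi,\pi)^{-1}(D_n)$, whence $\{C_n\}_{n\ge 0}$ is finite and $\Delta$ is pre-periodic. Alternatively one may bypass the geometry and simply combine the explicit computations already in hand: Lemma \ref{diaginP1P1} forces $\omega/\omega'$ to be a root of unity as soon as $\Delta'$ is pre-periodic, and Lemma \ref{diaginEE} turns that same condition into pre-periodicity of $\Delta$. Either route closes the equivalence.

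The main obstacle is exactly this converse, and I want to stress \emph{why} it works here: in general $\pi_*Y$ can be pre-periodic while $Y$ is not, and what rescues us is the finiteness of $(\pi,\pi)$ together with the irreducibility of each orbit curve $C_n$. I would also caution against trying to prove the theorem by transporting the pair $(n,k)$ backward through $\pi$: although the pair descends, it need not lift, and indeed the periods of $\Delta$ and $\Delta'$ genuinely disagree whenever $\omega/\omega'$ is a root of unity of even order. The equivalence is therefore an equivalence of pre-periodicity only, not of orbit length.
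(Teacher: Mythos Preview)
Your proposal is correct. The ``alternative'' you mention at the end---combining Lemma~\ref{diaginP1P1} and Lemma~\ref{diaginEE} through the common criterion that $\omega/\omega'$ be a root of unity---is exactly the paper's proof, which consists of nothing more than the sentence ``It is a consequence of the two lemmas'' followed by those lemmas.

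Your primary argument for the converse, via the finiteness of $(\pi,\pi)$ and the irreducibility of the orbit curves $C_n$, is a genuinely different route. It is more conceptual and more portable: the same reasoning would settle Question~\ref{Q1} affirmatively for any finite $\pi$ in a commutative square whenever the forward images $\varphi_*^n Y$ are irreducible, without ever identifying an explicit arithmetic criterion. The paper's lemma-based approach, by contrast, buys the explicit condition $(\omega/\omega')^k=\pm 1$, which is what drives the subsequent corollary and the examples exhibiting the mismatch in orbit lengths that you correctly warn about in your final paragraph.
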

It is a consequence of the two lemmas:

\begin{lemma} \label{diaginEE}
$\Delta$ is preperiodic for $[\omega] \times [\omega'] : E \times E \lra E  \times E$ if and only if $\omega/\omega' $ is a root of unity.
\end{lemma}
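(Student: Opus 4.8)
The plan is to make the forward orbit of $\Delta$ completely explicit and then to decide exactly when two of its members coincide. Writing $\Phi=[\omega]\times[\omega']$ and $\psi_n=([\omega^n],[\omega'^n]):E\lra E\times E$, a direct induction gives
\[ C_n:=\Phi^n_*\Delta=\psi_n(E)=\{([\omega^n]P,[\omega'^n]P):P\in E\}, \]
since $\Phi([\omega^n]P,[\omega'^n]P)=([\omega^{n+1}]P,[\omega'^{n+1}]P)$. Thus the whole orbit is the family of image curves $\{C_n\}_{n\geq 0}$, and by the pigeonhole principle $\Delta$ is pre-periodic if and only if $C_n=C_m$ for some pair $n>m$. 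I therefore reduce everything to the single question: when do $C_n$ and $C_m$ coincide as subvarieties of $E\times E$? (Throughout I assume $\omega,\omega'\neq0$; the degenerate cases are treated at the end.)

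The key step is the criterion
\[ C_n=C_m \iff \omega^n\omega'^m=\omega^m\omega'^n \text{ in } R \iff (\omega/\omega')^{\,n-m}=1. \]
For the forward implication I note first that $C_n$ is contained in the algebraic subgroup $G_n=\{(x,y)\in E\times E:[\omega'^n]x=[\omega^n]y\}$, because on the parametrization $x=[\omega^n]P,\ y=[\omega'^n]P$ both sides equal $[\omega^n\omega'^n]P$. If now $C_n=C_m$ as sets, then every point of $C_n$ also satisfies the defining relation of $G_m$; substituting the parametrization of $C_n$ yields $[\omega'^m\omega^n]P=[\omega^m\omega'^n]P$ for all $P\in E$, hence the equality of endomorphisms $[\omega^n\omega'^m]=[\omega^m\omega'^n]$, i.e.\ $\omega^n\omega'^m=\omega^m\omega'^n$ in $\End(E)=R$. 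Since $R$ is an order in an imaginary quadratic field, it is an integral domain, so I may pass to its fraction field and cancel $\omega^m\omega'^m$ to obtain $(\omega/\omega')^{\,n-m}=1$.

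For the converse, suppose $\omega/\omega'$ is a root of unity of order $k\geq1$, so that $u:=\omega^k=\omega'^k$ is a nonzero element of $R$. Then for every $n$ the parametrization of $C_{n+k}$ is $([u][\omega^n]P,[u][\omega'^n]P)$, and as $P$ ranges over $E$ the point $[u]P$ ranges over all of $E$ because $[u]$ is an isogeny; hence $C_{n+k}=C_n$ and $\Delta$ is pre-periodic with orbit $\{C_0,\dots,C_{k-1}\}$. Combined with the previous paragraph this proves the lemma: $\Delta$ is pre-periodic if and only if some $C_n=C_m$ with $n\neq m$, if and only if $\omega/\omega'$ is a root of unity.

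I expect the only genuinely delicate points to be bookkeeping rather than conceptual. The main thing to get right is that pre-periodicity refers to the image curves as subvarieties, so that the set-theoretic equality $C_n=C_m$ is exactly what must be detected; the cancellation step then relies essentially on $\End(E)$ being a domain, which is where the CM hypothesis enters. Equivalently, one can see the criterion transcendentally: over $\C$ with $E=\C/L$, the curve $C_n$ lifts to the complex line $\C\cdot(\omega^n,\omega'^n)$, and two one-dimensional subtori agree if and only if these lines coincide, i.e.\ if and only if $(\omega^n,\omega'^n)$ and $(\omega^m,\omega'^m)$ are proportional. Finally, the degenerate cases: if exactly one of $\omega,\omega'$ vanishes the orbit collapses after one step to $\{0\}\times E$ or $E\times\{0\}$ and is trivially finite, so these are excluded from the root-of-unity dichotomy and the hypothesis $\omega\omega'\neq0$ is harmless.
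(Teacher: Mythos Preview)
Your proof is correct and reaches the same conclusion as the paper's, but the forward direction is organized differently. The paper (following Ghioca--Tucker) picks a single non-torsion point $P\in E$, finds a matching $Q$ on the other curve, and observes that $[\omega]^k(P)-Q$ and $[\omega']^k(P)-Q$ both lie in torsion kernels; subtracting gives that $([\omega]^k-[\omega']^k)(P)$ is torsion, which forces the endomorphism $[\omega]^k-[\omega']^k$ to vanish. You instead note the structural fact that $C_n$ sits inside the algebraic subgroup $G_n=\ker([\omega'^n]\circ\mathrm{pr}_1-[\omega^n]\circ\mathrm{pr}_2)$, so that $C_n=C_m$ forces $C_n\subset G_m$ and hence the endomorphism identity $\omega^n\omega'^m=\omega^m\omega'^n$ directly, with no torsion bookkeeping. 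Your route is a bit cleaner and makes the role of commutativity and the domain property of $R=\End(E)$ more transparent; the paper's route is more hands-on and generalizes immediately to the $\pm$ ambiguity needed in the companion lemma for $\Delta'\subset\Pl^1\times\Pl^1$. The converse arguments are essentially identical (surjectivity of the isogeny $[u]=[\omega^k]$), and your remark on the degenerate cases $\omega\omega'=0$ is a welcome addition that the paper leaves implicit.
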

\begin{proof}
We reproduce the proof of Ghioca and Tucker.  Suppose that $([\omega]^{n+k}, [\omega']^{n+k})(\Delta)=([\omega]^{n}, [\omega']^{n})(\Delta)$ for some $n,k>0$.  Consider a non-torsion point $P \in E$, then there exist $Q \in E$ also non-torsion such that $([\omega]^{n+k}, [\omega']^{n+k})(P,P)=([\omega]^{n}, [\omega']^{n})(Q,Q)$.  But then $[\omega]^{n+k}(P)=[\omega]^{n}(Q)$ and $[\omega']^{n+k}(P)=[\omega']^{n}(Q)$ or equivalently $[\omega]^{n}([\omega]^{k}(P)-Q)=0$ and $[\omega']^{n}([\omega']^{k}(P)-Q)=0$.  These last two equations are saying that there are
torsion points $P_1,P_2$ such that $[\omega]^{k}(P)-Q=P_1$ and $[\omega']^{k}(P)-Q=P_2$ and therefore $[\omega]^{k}(P)-[\omega']^{k}(P)$ will also be a torsion point, and that cannot be for $P$ non-torsion unless $[\omega]^{k}-[\omega']^{k}=0$ and therefore $\omega/\omega'$ is a root of unity.  Conversely, suppose that $[\omega]^{k}=[\omega']^{k}$, then $([\omega]^{n+k},[\omega']^{n+k})(P,P)=([\omega]^{n},[\omega']^{n})([\omega]^{k}(P),[\omega]^k(P))$ and because $[\omega]^k$ is surjective $([\omega]^{n+k}, [\omega']^{n+k})(\Delta) = ([\omega]^{n}, [\omega']^{n})(\Delta)$. \end{proof}
\begin{lemma} \label{diaginP1P1}
$\Delta'$ is preperiodic for $\varphi_{[\omega']}\times \varphi_{[\omega']} : \Pl^1  \times \Pl^1 \lra \Pl^1 \times  \Pl^1$ if and only if $\omega/\omega' $ is a root $\pm 1$.
\end{lemma}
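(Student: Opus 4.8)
The plan is to reproduce the point-chasing argument of Lemma~\ref{diaginEE}, the only new ingredient being the sign ambiguity coming from the hyperelliptic quotient. The sole properties of $\pi$ that I would use are the commutation $\varphi_{[\omega]}(\pi(P))=\pi([\omega]P)$ (so that $\varphi_{[\omega]}^{m}=\varphi_{[\omega^{m}]}$) and the defining relation that $\pi(A)=\pi(B)$ holds precisely when $A=\pm B$. A point of the $m$-th image $(\varphi_{[\omega]}^{m}\times\varphi_{[\omega']}^{m})(\Delta')$ then has the form $(\pi([\omega^{m}]P),\pi([\omega'^{m}]P))$ with $P\in E$, and the whole question turns into a statement about such pairs.

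For the forward implication I would assume $(\varphi_{[\omega]}^{n+k}\times\varphi_{[\omega']}^{n+k})(\Delta')=(\varphi_{[\omega]}^{n}\times\varphi_{[\omega']}^{n})(\Delta')$ for some $n,k>0$ and fix a non-torsion $P\in E$. Matching the corresponding point of the left-hand curve with a point of the right-hand curve produces a (necessarily non-torsion) $Q\in E$ together with signs $\varepsilon_1,\varepsilon_2\in\{\pm1\}$ such that
\[
[\omega^{n+k}]P=\varepsilon_1[\omega^{n}]Q,\qquad [\omega'^{n+k}]P=\varepsilon_2[\omega'^{n}]Q .
\]
Exactly as in Lemma~\ref{diaginEE}, each equation says that $[\omega^{k}]P-\varepsilon_1 Q$ and $[\omega'^{k}]P-\varepsilon_2 Q$ lie in a finite kernel, hence are torsion; eliminating $Q$ and writing $\varepsilon=\varepsilon_1\varepsilon_2$ gives that $([\omega'^{k}]-\varepsilon[\omega^{k}])P$ is torsion. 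Thus for this non-torsion $P$ one of the two endomorphisms $[\omega'^{k}-\omega^{k}]$, $[\omega'^{k}+\omega^{k}]$ sends $P$ into the torsion subgroup. A non-zero endomorphism carries non-torsion points to non-torsion points, so one of $\omega'^{k}-\omega^{k}$, $\omega'^{k}+\omega^{k}$ must vanish, i.e. $(\omega/\omega')^{k}=\pm1$, so $\omega/\omega'$ is a root of $\pm1$ (equivalently, a root of unity).

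For the converse I would treat the two cases separately. If $(\omega/\omega')^{k}=1$ then $[\omega^{k}]=[\omega'^{k}]$ and the claim is immediate from Lemma~\ref{diaginEE}. If instead $(\omega/\omega')^{k}=-1$, then $\omega^{k}=-\omega'^{k}$, whence $[\omega^{k}]P=-[\omega'^{k}]P$ and therefore $\pi([\omega^{k}]P)=\pi([\omega'^{k}]P)$ for every $P$. A point of the $k$-th image is then $(\pi([\omega'^{k}]P),\pi([\omega'^{k}]P))$, which lies on $\Delta'$; using that $[\omega'^{k}]$ is surjective one gets $(\varphi_{[\omega]}^{k}\times\varphi_{[\omega']}^{k})(\Delta')=\Delta'$, so $\Delta'$ is in fact periodic.

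The main obstacle—and really the whole point of the statement—is the extra sign. Because $\pi$ identifies $P$ with $-P$, the preimage of $\Delta'$ in $E\times E$ is not $\Delta$ alone but the union of $\Delta$ with the anti-diagonal $\{(P,-P):P\in E\}$, and it is this second component that makes the value $(\omega/\omega')^{k}=-1$ admissible on $\Pl^{1}\times\Pl^{1}$ while only $(\omega/\omega')^{k}=1$ collapses $\Delta$ on $E\times E$. Keeping the two signs $\varepsilon_1,\varepsilon_2$ independent throughout the elimination is the delicate bookkeeping step; once that is done the dichotomy $\omega'^{k}=\pm\omega^{k}$ drops out, and comparing the least such $k$ with the least $k$ for which $\omega^{k}=\omega'^{k}$ is what produces the promised examples in which the orbits of $\Delta$ and of $\Delta'$ have different lengths.
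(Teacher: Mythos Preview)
Your argument is correct and follows essentially the same route as the paper: lift to $E\times E$, chase a non-torsion point, absorb the sign ambiguity from $\pi$, and conclude $(\omega/\omega')^{k}=\pm1$. Two minor differences worth noting: the paper normalizes one of your signs away by writing the lifted pair as $(Q,\pm Q)$ rather than carrying both $\varepsilon_{1},\varepsilon_{2}$ (these are equivalent since $Q\mapsto -Q$ absorbs one sign), and the paper's written proof actually omits the converse direction that you supply explicitly.
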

\begin{proof}
The proof is analogous to the case of $\Delta$.  Suppose that we have $(\varphi_{[\omega]}, \varphi_{[\omega']})^{n+k}(\Delta')=(\varphi_{[\omega]}, \varphi_{[\omega']})^n(\Delta')$ for some $n \geq 0$ and $k>0$.  Then $(\pi,\pi)([\omega]^{n+k}, [\omega']^{n+k})(\Delta)=(\pi,\pi)([\omega]^{n}, [\omega']^{n})(\Delta)$ and for each $P \in E$ there will be $Q \in E$ with $([\omega]^{n+k}, [\omega']^{n+k})(P,P)=([\omega]^{n}, [\omega']^{n})(Q,\pm Q)$.  But then $[\omega]^{n+k}(P)=[\omega]^{n}(Q)$ and $[\omega']^{n+k}(P)=[\omega']^{n}(\pm Q)$ or equivalently $[\omega]^{n}([\omega]^{k}(P)-Q)=0$ and $[\omega']^{n}([\omega']^{k}(P)\mp Q)=0$.  These last two equations are saying that there are
torsion points $P_1,P_2$ such that $[\omega]^{k}(P)-Q=P_1$ and $[\omega']^{k}(P)\mp Q=P_2$ and therefore $P_1 \mp P_2=[\omega]^{k}(P) \mp [\omega']^{k}(P)=([\omega]^{k} \mp [\omega']^{k})(P)$ will also be a torsion point, and that cannot be if we choose $P$ non-torsion unless $[\omega]^{k} \mp [\omega']^{k}=0$ and $(\omega/\omega')^k= \pm 1$. \end{proof}
\begin{corollary}If $(\omega/\omega')^k=-1$, $\Delta'$ is pre-periodic with pair $(n,k)$ but $\Delta$ is not pre-periodic with the same pair.  If $(\omega/\omega')^k=1$, both $\Delta$ and $\Delta'$ are pre-periodic with the same pair $(n,k)$.\end{corollary}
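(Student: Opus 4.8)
The plan is to read the corollary off directly from the pair-level information already contained in the proofs of Lemmas \ref{diaginEE} and \ref{diaginP1P1}, rather than from their bare iff-statements. The key observation is that each of those proofs in fact establishes something sharper than advertised: it pins down exactly which pairs $(n,k)$ witness pre-periodicity, and the controlling condition depends only on $k$.

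First I would extract from the proof of Lemma \ref{diaginEE} the assertion that $\Delta$ is pre-periodic with pair $(n,k)$ if and only if $[\omega]^k=[\omega']^k$, i.e. $(\omega/\omega')^k=1$. The forward direction is precisely the computation in that proof: assuming $([\omega]^{n+k},[\omega']^{n+k})(\Delta)=([\omega]^n,[\omega']^n)(\Delta)$ and choosing a non-torsion $P$, one forces $[\omega]^k-[\omega']^k$ to annihilate $P$, hence to vanish. The converse is the surjectivity argument ending that proof, which yields the equality of images for every $n\geq 0$ as soon as $[\omega]^k=[\omega']^k$. In neither direction does the value of $n$ play any role. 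I would then carry out the same reading for Lemma \ref{diaginP1P1}, obtaining that $\Delta'$ is pre-periodic with pair $(n,k)$ if and only if $(\omega/\omega')^k=\pm 1$; the sign ambiguity is exactly the $\pm Q$ produced by the hyperelliptic involution in that proof.

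With these two pair-level equivalences in hand, the corollary is immediate. If $(\omega/\omega')^k=-1$, then $(\omega/\omega')^k=\pm 1$ holds, so $\Delta'$ is pre-periodic with pair $(n,k)$; but $(\omega/\omega')^k=1$ fails, so by the first equivalence $\Delta$ is not pre-periodic with the same pair $(n,k)$. (It is, however, pre-periodic with pair $(n,2k)$, since $(\omega/\omega')^{2k}=1$.) If instead $(\omega/\omega')^k=1$, then both equivalences apply at once, and $\Delta$ and $\Delta'$ are both pre-periodic with the same pair $(n,k)$.

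The only real obstacle is the bookkeeping in the second paragraph: one must confirm that the two lemma proofs are genuinely pair-wise sharp, so that ``pre-periodic with pair $(n,k)$'' is governed solely by the condition on $k$ and is independent of $n$. This is guaranteed because the necessary condition is derived from a single non-torsion point $P$ and the sufficient condition uses only surjectivity of $[\omega]^k$, neither of which involves $n$. Once this independence is recorded, no further computation is needed.
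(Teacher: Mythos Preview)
Your proposal is correct and matches the paper's own treatment: the paper states the corollary immediately after Lemmas \ref{diaginEE} and \ref{diaginP1P1} with no separate proof, leaving it as a direct consequence of the pair-level information contained in those two arguments. Your reading---that the proof of Lemma \ref{diaginEE} in fact characterizes pre-periodicity with pair $(n,k)$ by $(\omega/\omega')^k=1$, and that of Lemma \ref{diaginP1P1} by $(\omega/\omega')^k=\pm 1$, independently of $n$---is exactly what the paper expects the reader to extract.
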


\begin{example} The elliptic curve $E : y^2=x^3+x$
admits complex multiplication by $R=\Z[i]$.  When we take $\omega=2+i$ and $\omega'=1-2i$ in $R$ we have $\omega/\omega'=-i$, therefore in the diagram:
\[
\begin{CD}
 \Delta \subset E \times E            @>[2+i] \times [1-2i]>>   E \times E \\
    @V (\pi,\pi) VV            @V (\pi,\pi) VV          \\
  \pi(\Delta)=\Delta' \subset \Pl^1 \times \Pl^1   @>\varphi_{[2+i]} \times \varphi_{[1-2i]}>> \Pl^1 \times \Pl^1 \\
\end{CD}\vspace{3mm}
\]
we will have that $\Delta$ is pre-periodic for $[2+i]\times [1-2i]$ with pair $(0,4)$ (but not $(0,2)$), while $\Delta'$ is pre-periodic for $\varphi_{[2+i]} \times \varphi_{[1-2i]}$ with pair $(0,2)$.  We refer to \cite{Pineiro-2009} for the explicit computation of the maps $\varphi_{\omega},\varphi_{\omega'}$.
\end{example}

\begin{example}
The curve $E : y^2=x^3+1$
admits complex multiplication by the ring $R=\Z[\rho]$ where
$\rho=(\sqrt{-3}-1)/2$.  For $\omega=\sqrt{3}\rho$ and $\omega'=\sqrt{3}$ we will get $(\omega/\omega')^3=1$ and $\Delta, \Delta'$ will be pre-periodic with the same pair $(n,3)$.  Again we refer to \cite{Pineiro-2009} for explicit computations of the maps $\varphi_{\omega},\varphi_{\omega'}$.  \end{example}

\providecommand{\bysame}{\leavevmode\hbox to3em{\hrulefill}\thinspace}

\end{document}